\newcommand{\remove}[1]{}
 \numberwithin{equation}{subsection}
\title{Teaching dimension, VC dimension and critical sets for Latin squares}
\author{
Hamed Hatami
\thanks{Supported by an NSERC grant.}
\\
School of Computer Science\\
McGill University, Montreal\\
\texttt{hatami@cs.mcgill.ca}
\and
Yingjie Qian
\\
The Department of Mathematics and Statistics\\
McGill University, Montreal\\
\texttt{yingjie.qian@mail.mcgill.ca}
}
\newtheorem{theorem}{Theorem}[section]
\newtheorem{lemma}[theorem]{Lemma}
\newtheorem{conjecture}[theorem]{Conjecture}
\newtheorem{corollary}[theorem]{Corollary}
\newtheorem{remark}[theorem]{Remark}
\def\TD{{\mathrm{TD}}}
\def\RTD{{\mathrm{RTD}}}
\def\VC{{\mathrm{VC}}}
\newcommand{\C}{\mathcal{C}}
\newcommand{\cL}{\mathcal{L}}
\newcommand{\T}{\mathcal{T}}
\title{Teaching dimension, VC dimension and critical sets for Latin squares}
\author{
Hamed Hatami
\thanks{Supported by an NSERC grant.}
\\
School of Computer Science\\
McGill University, Montreal\\
\texttt{hatami@cs.mcgill.ca}
\and
Yingjie Qian
\\
The Department of Mathematics and Statistics\\
McGill University, Montreal\\
\texttt{yingjie.qian@mail.mcgill.ca}
}
\begin{document}

\maketitle
\begin{abstract}
A critical set in an $n \times n$ Latin square is a minimal set of entries that uniquely identifies it among all Latin squares of the same size.  It is conjectured by Nelder in 1979, and later independently by Mahmoodian, and Bate and van Rees that the size of the smallest critical set is $\lfloor n^2/4\rfloor$. We prove a lower-bound of $n^2/10^4$  for sufficiently large $n$, and thus confirm the quadratic order predicted by the conjecture. {This improves a recent lower-bound of $\Omega(n^{3/2})$ due to Cavenagh and Ramadurai.}

From the point of view of computational learning theory, the size of the smallest critical set corresponds to the minimum teaching dimension of the set of Latin squares. We study two related notions of dimension from learning theory. We prove a lower-bound of $n^2-(e+o(1))n^{5/3}$ for both of the VC-dimension and the recursive teaching dimension.

\end{abstract}

%\tableofcontents

\paragraph*{Keywords:} Latin square, critical set, VC-dimension, teaching dimension, recursive teaching dimension, defining set, forcing set.

\section{Introduction}

\paragraph*{Latin squares and critical sets:}  Recall that a \emph{Latin square} of order $n$ is an $n\times n$ array filled with elements from the set $\{1,2,\dots,n\}$  such that every element  occurs exactly once in each row and each column. Note that a Latin square $L$  can also be represented as the set of the triples
\begin{equation}
\label{eq:LatinTriples}
\{(i,j,k) \ | \ \mbox{the $(i,j)$-th entry is equal to $k$} \}.
\end{equation}

Following the notation of computational learning theory, we call a set of entries in a Latin square $L$ that uniquely identifies it among all Latin squares of order $n$, a \emph{teaching set} for $L$. The minimal teaching sets in Latin squares were introduced and studied under the name \emph{critical set} by statistician John Nelder~\cite{Nel}, and  they have been studied extensively since then. We refer the reader to the two surveys~\cite{MR1393712}~and~\cite{MR2453264} for more on this topic.

Note that a partially filled Latin square can determine the values of certain empty entries $(i,j)$ in a straightforward manner: if all the values  $\{1,\ldots,n\} \setminus \{k\}$ already appear in the $i$-th row and $j$-th column, then the $(i,j)$-th entry is determined to be $k$. One can start from a partially filled Latin square $P$ and iteratively set the values of the entries that are determined in this manner. If this finally leads to a full Latin square $L$, then  $P$ is called a \emph{strong teaching set} for $L$. Obviously every strong teaching set is also a teaching set. Bate and van Rees~\cite{MR1724489} showed that every strong teaching set is of size at least $\lfloor n^2/4\rfloor$. Figure~\ref{fig:ex} illustrates an example of a strong teaching set of size $\lfloor n^2/4\rfloor$ for a $4 \times 4$ Latin square.

\begin{figure}[h]
\vspace{0.5cm}
\caption[]{An example of a strong teaching set for a $4 \times 4$ Latin square.}
\label{fig:ex}
\begin{center}
\begin{tabular}{ | c | c | c | c |}
\hline
  1& & &  \\ \hline
   &2& &  \\ \hline
   & & &  \\ \hline
   &4&2& \\ \hline
\end{tabular}
\quad
\begin{tabular}{ | c | c | c | c |}
\hline
  1&3&4&2 \\ \hline
  4&2&1&3 \\ \hline
  2&1&3&4 \\ \hline
  3&4&2&1 \\ \hline
\end{tabular}
\end{center}
\end{figure}

Moreover, Bate and van Rees~\cite{MR1724489} conjectured that this bound holds for every teaching set. This was also independently conjectured  earlier by Nelder\footnote{John Nelder: Private communication to Jennifer Seberry (1979).}, and Mahmoodian~\cite{MR1492638}.

\begin{conjecture} \label{conj:critical}
Every critical set for a Latin square of order $n$ is of size at least $\lfloor n^2/4\rfloor$.
\end{conjecture}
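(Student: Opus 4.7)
The plan is to attempt the sharp bound by recasting the problem as a weighted hitting-set problem on the hypergraph of Latin trades. Recall that a \emph{Latin trade} in $L$ is a nonempty set of cells of $L$ whose values can be simultaneously replaced so that the resulting array is again a Latin square of order $n$. A set $S \subseteq L$ is a critical (equivalently, minimal teaching) set if and only if $S$ intersects every Latin trade in $L$. Consequently $|S|$ is at least the covering number $\tau$ of the Latin-trade hypergraph on $L$, and by LP duality it suffices to exhibit, for every Latin square $L$ of order $n$, a nonnegative weighting $w$ of the Latin trades in $L$ satisfying $\sum_{T \ni (i,j,k)} w(T) \le 1$ for every $(i,j,k)\in L$ and with total mass $\sum_T w(T)\,|T| \ge \lfloor n^2/4 \rfloor$.

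The construction of such a weighting would proceed in two regimes depending on the structure of $L$. When $L$ is intercalate-rich (for instance, when $L$ is the Cayley table of an elementary abelian $2$-group), a weighted packing of size-$4$ intercalate trades should already attain the target. When $L$ is intercalate-poor---the extreme case being the addition table of $\mathbb{Z}/n\mathbb{Z}$ for odd $n$, which contains no intercalate at all---intercalates must be replaced by larger trades arising from $3$-cycles among triples of rows, or from even cycles in the bipartite graph comparing two rows entry by entry. The key auxiliary result would be a sharp Latin-trade density estimate: every Latin square of order $n$ admits a fractional trade-cover of weighted mass at least $\lfloor n^2/4 \rfloor$, regardless of its intercalate count.

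The main obstacle is precisely this sharp density estimate. The constant $1/4$ is combinatorially fragile: it is the correct answer for the Bate--van Rees strong-teaching-set bound but has so far resisted all local trade-packing arguments. A complementary attack is induction on $n$: given a hypothetical critical set $S$ with $|S| < \lfloor n^2/4\rfloor$, average over the $n$ rows, $n$ columns, and $n$ symbols to find a line whose deletion produces a critical set of size $< \lfloor (n-1)^2/4\rfloor$ for an $(n-1)\times(n-1)$ Latin square. The delicate point is preserving criticality after deletion, since removing a symbol collapses a row-column matching and can introduce spurious alternative completions; handling this requires a careful choice of the deleted line together with a local modification of $S$. I expect that simply sharpening the constants in the $n^2/10^4$ argument of the paper will not close the remaining factor of roughly $2500$, and that the full conjecture will ultimately require either a direct reduction to the strong-teaching-set case (which would immediately imply the conjecture via Bate--van Rees) or a genuinely new structural invariant beyond trade-hitting.
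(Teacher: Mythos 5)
This statement is a conjecture that the paper itself does not prove; the paper only establishes the weaker quadratic bound $10^{-4}n^2$ (Theorem~\ref{thm:scs}), via a route entirely different from yours: take a hypothetical teaching set of size $\le \epsilon n^2$, complete all heavily-used rows, columns and symbols from $L$, alter one remaining entry, and invoke the $K_3$-decomposition theorem for dense locally balanced tripartite graphs to complete to a second Latin square. Your proposal, by contrast, is a research plan rather than a proof, and you say as much: the ``sharp Latin-trade density estimate'' that every Latin square admits a fractional trade packing of value $\lfloor n^2/4\rfloor$ is exactly the content of the conjecture in dual form, and you supply no argument for it in either the intercalate-rich or intercalate-poor regime. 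The induction sketch likewise leaves its one genuinely hard step (preserving criticality, or even the trade-hitting property, after deleting a line) unaddressed. So there is no proof here to compare against; the conjecture remains open after your write-up just as it does after the paper.

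One concrete technical error worth fixing even at the plan level: your LP duality bound is misstated. With the fractional matching constraints $\sum_{T \ni (i,j,k)} w(T) \le 1$ for every $(i,j,k) \in L$, the quantity that lower-bounds the covering number $\tau$ is $\sum_T w(T)$, not $\sum_T w(T)\,|T|$. Indeed, for any hitting set $S$ one has $\sum_T w(T) \le \sum_T w(T)\,|T\cap S| = \sum_{x\in S}\sum_{T\ni x} w(T) \le |S|$, whereas $\sum_T w(T)\,|T|$ is merely bounded above by $n^2$ by summing the constraints and carries no lower bound on $|S|$ (a single weight-one trade of size close to $n^2$ already makes your stated objective large while certifying nothing). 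This matters because packing $n^2/4$ worth of \emph{weight} (as opposed to weight times size) out of trades, each of which has size at least $4$, is a substantially harder target and is where the known approaches stall.
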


The existence of  critical sets of size $\lfloor n^2/4\rfloor$ was shown by Curran and van Rees~\cite{MR541920} and Cooper, Donovan and Seberry~\cite{MR1129273}. However, despite several efforts, there has been little progress towards resolving this conjecture. Fu, Fu, and Rodger~\cite{MR1468170} showed a lower-bound of $\lfloor (7n-3)/6 \rfloor$ for $n\geq20$. This bound was
improved by Horak, Aldred, and Fleischner~\cite{MR1468170} to $\lfloor (4n-8)/3\rfloor$ for $n\geq8$. {Cavenagh~\cite{MR2330083} gave the first superlinear lower-bound of $n \lfloor (\log n)^{1/3}/2\rfloor$ in 2007. Recently, Cavenagh and Ramadurai~\cite{CR} improved this bound to $\Omega(n^{3/2})$.}

%Finally in 2007, Cavenagh~\cite{MR2330083}  gave the first superlinear lower-bound by showing that the size of a critical set in a Latin square is at least $n \lfloor (\log n)^{1/3}/2\rfloor$.

In Theorem~\ref{thm:scs} below, we use recent results of Barber, {K{\"u}hn}, Lo, Osthus and Taylor~\cite{BKLOT} and Dukes~\cite{Duk} about edge-decomposition of graphs into triangles to show that for sufficiently large $n$, every critical set in a Latin square is of size at least $n^2/10^4$, thus establishing that as it was predicted in Conjecture~\ref{conj:critical}, the size of the smallest critical set is of quadratic order.

\begin{theorem}
\label{thm:scs}
For sufficiently large $n$, every critical set for a Latin square of order $n$ is of size at least $10^{-4}n^2$.
\end{theorem}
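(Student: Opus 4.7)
The plan is to reformulate the problem in terms of triangle decompositions of the complete tripartite graph $K_{n,n,n}$ and then exploit the triangle-decomposition theorems of Barber--K\"uhn--Lo--Osthus--Taylor~\cite{BKLOT} and Dukes~\cite{Duk}. Using~\eqref{eq:LatinTriples}, I regard a Latin square $L$ as a partition of the edges of $K_{n,n,n}$ (whose three parts index rows, columns, and symbols) into $n^2$ triangles. Given a critical set $C \subseteq L$, let $E_C$ denote the $3|C|$ edges covered by the triangles of $C$, and set $G := K_{n,n,n} - E_C$; then $T := L \setminus C$ is a triangle decomposition of $G$. The crucial observation is that $C$ uniquely determines $L$ if and only if $T$ is the \emph{unique} triangle decomposition of $G$: any second decomposition $T'$ of $G$ yields another Latin square $L' := C \cup T' \neq L$ still containing $C$. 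Hence it suffices to show that, under the assumption $|C| < 10^{-4} n^2$, the graph $G$ admits a second triangle decomposition.

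To produce such a second decomposition, I will perform a single ``swap''. Choose an edge $e = \{r, c\} \in E(G)$ with $r$ a row-vertex and $c$ a column-vertex such that both are incident to only a few edges of $E_C$; this can be arranged because $|E_C|$ is small and an averaging argument guarantees many such edges. Let $\tau_0 := (r, c, L_{rc})$ be the unique $T$-triangle through $e$. Since $|E_C| < 3 \cdot 10^{-4} n^2$ and only a few of these edges touch $r$ or $c$, there are many symbol-vertices $s' \neq L_{rc}$ with $(r, s'), (c, s') \in E(G)$; fix any such $s'$ that is itself incident to few edges of $E_C$. Then $\tau_1 := (r, c, s')$ is a triangle in $G$, and $\tau_1 \notin T$ because the unique $T$-triangle through $e$ is $\tau_0 \neq \tau_1$. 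Let $G' := G - E(\tau_1)$. The graph $G'$ is still $K_3$-divisible (every vertex degree is even and the edge count stays divisible by $3$), and differs from $G$ by only three edges, located at well-chosen vertices. Provided $G'$ satisfies the minimum-degree hypothesis of~\cite{BKLOT,Duk}, those theorems furnish a triangle decomposition $S$ of $G'$. Setting $T' := S \cup \{\tau_1\}$ gives a triangle decomposition of $G$ with $\tau_1 \in T' \setminus T$, so $T' \neq T$, and $L' := C \cup T'$ is a second Latin square extending $C$, the desired contradiction.

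The main technical obstacle is verifying the minimum-degree hypothesis of the triangle-decomposition theorem on $G'$ (and, ultimately, calibrating the constant $10^{-4}$). Although $|E_C|$ is small in aggregate, these edges may concentrate at a few vertices: a row of $L$ containing many entries of $C$ corresponds to a row-vertex of $G$ with very low degree, and similarly for columns and symbols. I would address this through a preliminary cleaning step. By a Markov-type count, the set of ``bad'' vertices (those incident to more than, say, $n/500$ edges of $E_C$) has size $O(n)$, and such vertices can either be excluded from the region where the decomposition theorem is invoked or absorbed via the iterative-absorption machinery inherent in~\cite{BKLOT,Duk}. Choosing $e$ and $\tau_1$ entirely from good vertices ensures that the three affected vertices of $\tau_1$ retain near-maximum degree in $G'$, so the tripartite minimum-degree hypothesis of the triangle-decomposition theorems is met on the relevant region. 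Carefully balancing the quantitative losses incurred by this cleaning against the density threshold of~\cite{BKLOT,Duk} then yields the claimed bound $n^2/10^4$.
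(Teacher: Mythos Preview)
Your overall framework matches the paper's: both translate to triangle decompositions of $K_{n,n,n}$, both invoke the tripartite decomposition theorem of~\cite{BKLOT,Duk}, and both force $L'\neq L$ by inserting a single ``wrong'' triangle $(r,c,s')$ before applying that theorem. The divergence is exactly at the step you yourself flag as ``the main technical obstacle,'' and your treatment of it is a genuine gap.

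Saying that bad vertices ``can either be excluded from the region where the decomposition theorem is invoked or absorbed via the iterative-absorption machinery inherent in~\cite{BKLOT,Duk}'' does not work as written. The theorem is a black box demanding a \emph{balanced}, \emph{locally balanced} tripartite graph with a global minimum-degree threshold; you cannot simply delete bad vertices, because their incident edges in $G'$ still need to be covered by triangles (and deleting them typically destroys both local balance and $K_3$-divisibility), and re-opening the absorption internals is a substantial new argument, not a citation. The paper's cleaning step is concrete and different from either option you list: for every row, column, or symbol used at least $\delta n$ times in $P$ (with $\delta=0.012$), it \emph{completes} that row/column/symbol using the entries of $L$, and then completes a few \emph{additional} rows, columns, and symbols so that exactly $m$ of each type are fully filled, where $m\le(\epsilon/\delta)n$. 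After removing the triangles of this enlarged partial square together with the swapped triple $(x,y,z')$, every bad vertex is isolated and can be discarded; the remaining graph is genuinely balanced and locally balanced on $3(n-m)$ vertices with minimum degree exceeding $\tfrac{101}{52}(n-m)$, so the black-box theorem applies directly. The equalization to exactly $m$ of each type is what secures the ``balanced'' hypothesis, and this whole maneuver---fill bad lines from $L$ rather than delete them---is the missing idea in your proposal.
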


\paragraph*{VC, teaching, and recursive teaching dimensions:}  The notion of teaching set for Latin squares, as defined above, is quite natural, and can be easily defined for other combinatorial objects. Indeed similar notions have been defined and studied independently under various names in different contexts. For example, the term \emph{defining set} is used for block designs and graph colorings,  and the term  \emph{forcing set}, coined by Harary~\cite{MR1223833}, is used for other concepts such as perfect matchings, dominating sets, and geodetics {(see} the survey~\cite{MR2011736}).

The general concept of identifying an object by a small set of its attributes arises naturally  in the area of computational learning theory. Consider a finite set $\Omega$, and let $\mathcal{F}(\Omega)$ denote the power set of $\Omega$. In computational learning theory, a subset $\C \subseteq \mathcal{F}(\Omega)$ is refered to as a \emph{concept class}, and the elements $c \in {\C}$ are called \emph{concepts}. A set $S \subseteq \Omega$ is called a \emph{teaching set} for a concept $c \in \C$ if $c \cap S$ uniquely identifies $c$ among all other concepts. In other words, $(c \cap S) \neq (c' \cap S)$ for every concept $c' \neq c$. The notion of a teaching set  was independently introduced by Goldman and Kearns~\cite{MR1322630}, Shinohara and Miyano~\cite{SM} and Anthony et al.~\cite{ABCS}. It has also been studied under the names \emph{witness set} by Kushilevitz et al. in~\cite{MR1370141}, \emph{discriminant} in~\cite{Nat}, and \emph{specifying set} in~\cite{ABCS}.

Recall from (\ref{eq:LatinTriples}) that every Latin square of order $n$ can be represented as a subset of $\{1,\ldots,n\}^3$. Hence the set $\cL_n$ of all Latin squares of order $n$ can be considered as a concept class. It is worth noting that our definition of a teaching set for a Latin square coincides with its definition when $\cL_n$ is considered as a concept class.

The concept of a teaching set naturally gives rise  to various notions of dimension associated to concept classes. Let $\TD(c;\C)$ denote the smallest size of a teaching set for a concept $c \in \C$. The \emph{teaching dimension} and the \emph{minimum teaching dimension} of a concept class $\C$ are respectively defined as $\TD(\C)=\max\limits_{c\in\C}\TD(c;\C)$ and  $\TD_{\min}(\C)=\min\limits_{c\in\C}\TD(c;\C)$. It turns out that for some purposes, due to its local nature, the  minimum teaching dimension do not capture the characteristics of teaching and learning, and thus  the related notion of \emph{recursive teaching dimension} is often considered:
$$\RTD(\C)=\max\limits_{\C'\subseteq\C}\TD_{\min}(\C').$$
% One can also think of $\RTD(\C)$ to be the minimum $t$ such that there is an order of all the concepts of $\C$ as a sequence $c_1,c_2,\dots,c_{|C|}$ in a way that every concept $c_i,i<|\C|$, has a teaching set of size no larger than $t$ in $\{c_i,c_{i+1},\dots,c_{|\C|}\}$.
Note that $\TD_{\min}(\C) \le \RTD(\C)\leq \TD(\C)$ for every concept class $\C$.

Finally let us recall one of the most celebrated notions of dimension associated to a concept class, i.e.  its VC dimension (for Vapnik-Chervonenkis dimension). A subset $S \subseteq \Omega$ is said to be \emph{shattered} by $\C$ if for every $T \subseteq S$ there exists a concept $c$ with $c \cap S=T$. The size of the largest set shattered by $\C$ is called the \emph{VC-dimension} of $\C$.  Recently in~\cite{CCT}, using a surprisingly short argument, Chen, Cheng and Tang showed that  $\RTD(\C) \leq2^{d+1}(d-2)+d+4$, where $d=\VC(\C)$.

\paragraph*{VC, teaching, and recursive teaching dimensions for Latin Squares:}
Our main result, Theorem~\ref{thm:scs}, says that  $\TD_{\min}(\cL_n) \ge 10^{-4}n^2$ for sufficiently large $n$.  Ghandehari, Hatami and Mahmoodian~\cite{MR2136056} showed that every Latin square contains a critical set of size at most $n^2-\frac{\sqrt\pi}{2}n^{3/2}$, and moreover there are Latin squares with no critical sets of size smaller than $n^2-(e+o(1))n^{5/3}$. In other words, for sufficiently large $n$, we have
$$n^2-(e+o(1))n^{5/3}\leq \TD(\cL_n)\leq n^2-\frac{\sqrt\pi}{2}n^{3/2}.$$

On the other hand, $\RTD(\cL_n)$ does not seem to correspond to any of the previously studied parameters related to critical sets. In Theorem~\ref{thm:RT} below, we show that one can adopt the argument of~\cite{MR2136056} to obtain a stronger result that $\RTD(\cL_n)\geq n^2-(e+o(1))n^{5/3}$. Surprisingly, a similar argument combined with a lemma of Pajor  (Lemma~\ref{lem:shatter}) implies the same bound for the VC-dimension.

\begin{theorem}
\label{thm:VCdimension}
The VC-dimension of the class of Latin squares of order $n$ is at least $n^2-(e+o(1))n^{5/3}$.
\end{theorem}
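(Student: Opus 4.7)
My plan is to mimic the argument used in Theorem~\ref{thm:RT} (itself adapting~\cite{MR2136056}) and combine it with Pajor's lemma (Lemma~\ref{lem:shatter}).

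First, I would follow the probabilistic construction of~\cite{MR2136056}. Starting from a random Latin square $L$ of order $n$, permanent estimates (the van der Waerden / Egorychev--Falikman lower bound together with Br\'egman's upper bound) yield a subfamily $\mathcal{F}\subseteq\cL_n$ of Latin squares differing from $L$ on a controlled set of cells, with a lower bound on $|\mathcal{F}|$ calibrated to the target.

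Next, I would apply Pajor's lemma to $\mathcal{F}$: the number of subsets of $\Omega=\{1,\ldots,n\}^3$ shattered by $\mathcal{F}$ is at least $|\mathcal{F}|$. A structural observation is that every shattered subset $S$ contains at most one triple per cell, since two triples $(i,j,k_1)$ and $(i,j,k_2)$ at the same cell cannot both belong to any Latin square --- so the ``both in $L'\cap S$'' pattern is unrealizable, blocking shattering. Hence shattered subsets correspond to partial assignments from subsets of cells to values. A Stirling-based estimate then shows that the number of such partial assignments on fewer than $n^2-(e+o(1))n^{5/3}$ cells is strictly less than $|\mathcal{F}|$, so Pajor forces at least one shattered set of size $\ge n^2-(e+o(1))n^{5/3}$; since $\mathcal{F}\subseteq\cL_n$, this set is also shattered by $\cL_n$.

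The main technical obstacle is the precise matching of constants on both sides of the pigeonhole. Both the lower bound on $|\mathcal{F}|$ from permanents and the upper bound on small partial assignments from Stirling rely on $n!\sim(n/e)^n\sqrt{2\pi n}$, and it is the common $e^{-n}$ factor on both sides that supplies the matching $e$ in the $(e+o(1))n^{5/3}$ term. The detailed bookkeeping parallels that of~\cite{MR2136056} and the proof of Theorem~\ref{thm:RT}.
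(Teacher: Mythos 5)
Your overall strategy (Pajor's lemma plus a pigeonhole between the number of Latin squares and the number of candidate shattered sets of each size) is the paper's strategy, but your structural characterization of the shattered sets is too weak, and the counting step as you describe it fails. You only record that a shattered set $S$ contains at most one triple per cell, and then propose to bound the number of \emph{partial assignments} from subsets of cells to symbols. There are $\binom{n^2}{k}n^k \ge n^k$ such assignments on $k$ cells, so for $k = n^2 - en^{5/3}$ this is at least $n^{n^2 - en^{5/3}}$. Meanwhile $|\cL_n| = \bigl((1+o(1))n/e^2\bigr)^{n^2} = n^{n^2}e^{-(2+o(1))n^2}$, and since $e^{-2n^2} = n^{-2n^2/\ln n}$ with $2n^2/\ln n \gg en^{5/3}$, the number of partial assignments on $k$ cells already dwarfs $|\cL_n|$ (hence dwarfs $|\mathcal{F}|$) at the relevant range of $k$. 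No Stirling bookkeeping can rescue this; the pigeonhole simply does not close with that count.

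The missing idea is to use the definition of shattering with $T = S$ itself: if $S$ is shattered there must exist $L' \in \cL_n$ with $L' \cap S = S$, i.e.\ $S \subseteq L'$. Hence every shattered set is a (completable) partial Latin square, and the right quantity to bound is $|\T_{n,k}|$ via Lemma~\ref{lem:tnk} (Br\'egman), which is exponentially smaller than the count of arbitrary one-symbol-per-cell assignments --- roughly by the factor $(n-\tfrac{k}{n})!^{-2n}e^{-k}$ --- and this is exactly what makes the inequality $\sum_{k \le d}|\T_{n,k}| \ge |\cL_n|$ force $d > n^2 - e^{1+1/\sqrt{n}}n^{5/3}$. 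A secondary remark: your detour through a random Latin square $L$ and a subfamily $\mathcal{F}$ of squares close to $L$ is unnecessary and counterproductive here; the paper applies Pajor's lemma to all of $\cL_n$ directly, and shrinking to a subfamily only lowers the left-hand side of the pigeonhole. (That ``many squares agreeing with a fixed $L$'' construction belongs to the teaching-dimension argument of~\cite{MR2136056}, not to the VC bound.)
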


\begin{theorem}
\label{thm:RT}
The recursive teaching dimension of the class of Latin squares of order $n$ is at least $n^2-(e+o(1))n^{5/3}$.
 \end{theorem}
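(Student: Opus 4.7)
The plan is to exhibit a subclass $\C'\subseteq\cL_n$ with $\TD_{\min}(\C')\geq n^2-(e+o(1))n^{5/3}$, which then yields $\RTD(\cL_n)\geq\TD_{\min}(\C')$ by definition of $\RTD$. This strengthens the Ghandehari--Hatami--Mahmoodian result, which produces a single Latin square hard to teach within all of $\cL_n$, to a class in which \emph{every} Latin square is hard to teach relative to the class itself.

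The key input is the counting argument of~\cite{MR2136056}: for a uniformly random Latin square $L$, the \emph{intercalate hypergraph} $H_L$---whose hyperedges are the index sets of the $2\times 2$ Latin subsquares of $L$---has, with high probability, independence number at most $(e+o(1))n^{5/3}$. Equivalently, every $T\subseteq[n]^2$ with $|T|>(e+o(1))n^{5/3}$ contains the four cells of some intercalate $\tau$ of $L$; swapping the two values occurring inside $\tau$ then produces a Latin square $L\triangle\tau\neq L$ that agrees with $L$ on $[n]^2\setminus T$.

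I would take $\C'$ to be the collection of $L\in\cL_n$ with $\alpha(H_L)\leq(e+o(1))n^{5/3}$, the implicit $o(1)$ chosen so as to leave some slack, and verify two facts. \emph{Local stability:} if $\tau$ is an intercalate of $L\in\C'$, then the intercalates of $L$ and of $L\triangle\tau$ that are disjoint from $\tau$ coincide (because $L$ and $L\triangle\tau$ agree off the four cells of $\tau$), so trimming at most four elements from any independent set of $H_{L\triangle\tau}$ produces an independent set of $H_L$; this gives $\alpha(H_{L\triangle\tau})\leq\alpha(H_L)+4$, so a single intercalate swap preserves membership in $\C'$ up to an $O(1)$ additive error. \emph{Teaching bound:} for each $L\in\C'$ and each $S\subseteq[n]^2$ with $|S|<n^2-(e+o(1))n^{5/3}$, the complement $T$ contains some intercalate $\tau$ of $L$, and $L\triangle\tau$ lies in $\C'$ by local stability and agrees with $L$ on $S$, so $S$ cannot be a teaching set for $L$ within $\C'$. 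Combining the two, $\TD(L;\C')\geq n^2-(e+o(1))n^{5/3}$ for every $L\in\C'$, which is what is needed.

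The main obstacle is calibrating the slack in the definition of $\C'$ so that the $+4$ loss from local stability does not erode the leading constant $e$ in the final bound. One way to handle this cleanly is to set the threshold to $(e+\varepsilon(n))n^{5/3}$ for some $\varepsilon(n)\to 0$ with $\varepsilon(n)n^{5/3}\to\infty$, so that the additive $+4$ is dwarfed by the slack; one then argues, through a quantitative refinement of the GHM counting argument, that each $L\in\C'$ admits, inside every sufficiently large $T$, at least one intercalate $\tau\subseteq T$ whose swap remains safely inside $\C'$.
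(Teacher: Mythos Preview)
Your approach rests on a premise that is false: a uniformly random Latin square does \emph{not} have intercalate hypergraph with independence number $O(n^{5/3})$. The expected number of intercalates in a random Latin square of order $n$ is $\Theta(n^2)$ (indeed, it is known to be $(1+o(1))n^2/4$ with high probability), and a $4$-uniform hypergraph on $n^2$ vertices with at most $Cn^2$ edges trivially has an independent set of size at least $n^2-Cn^2$, obtained by deleting one vertex from each edge. Hence $\alpha(H_L)=\Theta(n^2)$ for typical $L$, and the class $\C'$ you propose is empty. Even for the most intercalate-rich Latin squares (Cayley tables of elementary abelian $2$-groups, with $\sim n^3/4$ intercalates), the bound $\alpha(H_L)\le (e+o(1))n^{5/3}$ is not known and is not what \cite{MR2136056} proves.

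The Ghandehari--Hatami--Mahmoodian argument you cite is purely a counting argument and says nothing about intercalates. It combines the van der Waerden lower bound on $|\cL_n|$ with a Br\'egman-based upper bound on $|\T_{n,k}|$ to show $|\cL_n|>|\T_{n,k}|$ whenever $k\le n^2-(e+o(1))n^{5/3}$; since each partial Latin square of size $k$ is a teaching set for at most one Latin square, some $L$ has no teaching set of size $k$. The paper's proof of the present theorem runs this same inequality through a removal process: starting from $\cL=\cL_n$, repeatedly delete any $L$ admitting a size-$k$ teaching set within the current $\cL$. Each deletion is witnessed by a distinct $P\in\T_{n,k}$ (once $P$ loses its unique extension it cannot witness again), so at most $|\T_{n,k}|<|\cL_n|$ squares are removed; the nonempty remainder $\C'$ then satisfies $\TD_{\min}(\C')>k$, giving $\RTD(\cL_n)\ge n^2-(e+o(1))n^{5/3}$. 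The obstacle you identify (calibrating slack for the $+4$ drift) is not the real difficulty; the intercalate route simply does not produce a nonempty $\C'$ with the required independence bound.
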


\section{Proof of Main Theorems}
In this section we present the proofs of our results, Theorem~\ref{thm:scs}, Theorem~\ref{thm:VCdimension}, and Theorem~\ref{thm:RT}.

\subsection{The size of the smallest critical set, Theorem~\ref{thm:scs}}
We give some remarks before proceeding to the proof of Theorem~\ref{thm:scs}. A graph $G$ has a \emph{$K_3$-decomposition} if its edge set can be partitioned into (edge-disjoint) copies of $K_3$. We call a $3$-partite graph $G$ \emph{balanced} if each part has the same number of vertices, and we call it \emph{locally balanced} if every vertex of $G$ has the same number of neighbours in each of the other two  parts (however, these numbers might be different for different vertices). The following theorem is immediate from results of  Barber, {K{\"u}hn}, Lo, Osthus and Taylor~\cite{BKLOT} and Dukes~\cite{Duk}.

\begin{theorem}[{See \cite[Corollary 1.6]{BKLOT} and \cite[Theorem 1.3]{Duk}}]
\label{thm:decomp}
Let $\gamma>0$ and $n>n_0(\gamma)$. Every balanced and locally balanced $3$-partite graph on $3n$ vertices with minimum degree at least $(101/52+\gamma)n$, admits a $K_3$-decomposition.
\end{theorem}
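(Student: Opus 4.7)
The plan is to recognize the statement as a direct corollary of the two cited black-box theorems, once one verifies that the divisibility hypotheses required by those theorems are automatic for balanced, locally balanced $3$-partite graphs.

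First, I would record the trivial necessary conditions for a $3$-partite graph $G$ on parts $V_1, V_2, V_3$ of equal size $n$ to admit a $K_3$-decomposition. Every triangle in $G$ uses exactly one vertex from each part, so each triangle contributes exactly one edge to each of the bipartite subgraphs $G[V_i, V_j]$. Hence in any integral $K_3$-decomposition, every vertex $v \in V_i$ must have the same number of neighbours in $V_j$ as in $V_k$ (because each triangle through $v$ uses one neighbour in each of the other two parts), and the three bipartite edge counts $|E(G[V_i, V_j])|$ must all be equal. The first condition is precisely local balance, which we have assumed; the second follows from the first by summing over $v \in V_i$. So the divisibility prerequisites for a $K_3$-decomposition come for free.

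Second, I would invoke Dukes' Theorem 1.3 as the fractional-decomposition ingredient: for a balanced, locally balanced $3$-partite graph on $3n$ vertices with minimum degree at least $(101/52+\gamma/2)n$, one obtains a fractional $K_3$-decomposition, i.e., a non-negative weighting of the triangles of $G$ whose weights sum to $1$ on each edge. Third, I would feed this fractional decomposition, together with the same minimum-degree hypothesis, into Corollary 1.6 of Barber, K\"uhn, Lo, Osthus and Taylor: their iterative-absorption framework promotes any sufficiently strong fractional $K_3$-decomposition, in a $3$-partite host satisfying the divisibility conditions above, into a genuine integral $K_3$-decomposition, provided $n$ is large enough in terms of the slack $\gamma$. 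Combining these two yields exactly the statement of the theorem.

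The bulk of the technical work sits inside the two cited theorems; on our side the only step requiring thought is to confirm that the fractional-to-integral transfer of BKLOT accepts the fractional decomposition produced by Dukes with essentially the same minimum-degree constant, the small loss being absorbed into $\gamma$. The one potential obstacle on our end, namely the arithmetic divisibility of degrees and edge counts, is dissolved for free by the local-balance hypothesis, so no further structural reduction is needed.
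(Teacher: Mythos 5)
Your proposal is correct and matches the paper's treatment: the paper offers no independent proof, stating only that the theorem is immediate from combining Dukes' fractional $K_3$-decomposition result with the fractional-to-integral transfer of Barber, K\"uhn, Lo, Osthus and Taylor, which is exactly the combination you describe. Your observation that local balance supplies the required divisibility conditions for free is the right (and only) point needing verification on the user's side.
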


Noting that a Latin square of order $n$ is a $K_3$-decomposition of the complete $3$-partite graph $K_{n,n,n}$,  Barber, {K{\"u}hn}, Lo, Osthus and Taylor~\cite{BKLOT} obtained the following corollary to Theorem~\ref{thm:decomp}.

\begin{corollary}[{\cite{BKLOT}}]
Let $P$ be a partial Latin square of order $n\geq n_0$ such that every row, column, and symbol is used at most $0.0288n$ times. Then $P$ can be completed to a Latin square.
\end{corollary}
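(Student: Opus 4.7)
The plan is to recast the completion problem as a $K_3$-decomposition problem and invoke Theorem~\ref{thm:decomp}. Identify the three parts of $K_{n,n,n}$ with the rows, columns, and symbols, so that every triangle with one vertex in each part corresponds to a triple $(i,j,k)$ encoding the entry ``row $i$, column $j$, symbol $k$''. Under this correspondence a Latin square of order $n$ is exactly a $K_3$-decomposition of $K_{n,n,n}$, and a partial Latin square $P$ is an edge-disjoint collection of triangles. I would set $G = K_{n,n,n}\setminus E(P)$ to be the graph obtained by deleting the edges spanned by these triangles; any $K_3$-decomposition of $G$ combined with the triangles of $P$ then yields a Latin square extending $P$.

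The next step is to verify that $G$ satisfies the hypotheses of Theorem~\ref{thm:decomp}. The balanced property is immediate. For the locally balanced property, note that if row $i$ appears in $t_i$ entries of $P$, those entries involve $t_i$ distinct columns and $t_i$ distinct symbols (since $P$ is partial Latin), so the vertex $r_i$ loses exactly $t_i$ edges to the column part and exactly $t_i$ edges to the symbol part. Thus $r_i$ has $n - t_i$ neighbors in each of the other two parts, and the analogous statement holds for column and symbol vertices. For the minimum degree, the hypothesis that every row, column, and symbol is used at most $0.0288\,n$ times gives
\[
\deg_G(v) \;\ge\; 2(n - 0.0288\,n) \;=\; 1.9424\,n
\]
for every vertex $v$ of $G$.

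Finally, since $101/52 = 1.94230\ldots$ is strictly less than $1.9424$, one can fix a constant $\gamma > 0$ (for instance $\gamma = 10^{-4}$) with $1.9424 \ge 101/52 + \gamma$. For $n$ at least the threshold $n_0(\gamma)$ provided by Theorem~\ref{thm:decomp}, the theorem supplies the desired $K_3$-decomposition of $G$, and gluing this decomposition to the triangles of $P$ completes $P$ to a full Latin square. The only real point to be careful about is the narrow numerical margin between $2(1 - 0.0288)$ and $101/52$, which is precisely the reason the constant $0.0288$ appears in the statement; no other obstacle is anticipated.
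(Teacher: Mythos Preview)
Your argument is correct and follows exactly the approach the paper indicates: translate the completion problem into a $K_3$-decomposition of $G=K_{n,n,n}\setminus E(P)$, check balance, local balance, and minimum degree, and invoke Theorem~\ref{thm:decomp}. One tiny numerical slip: $1.9424-101/52\approx 9.23\times 10^{-5}$, so your suggested $\gamma=10^{-4}$ is just barely too large; take, e.g., $\gamma=9\times 10^{-5}$ instead.
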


We will take a similar approach to prove Theorem~\ref{thm:scs}.

\begin{proof}[Proof of Theorem~\ref{thm:scs}]
Set $\epsilon=10^{-4}$. A \emph{partial Latin square} $P$ of order $n$ is a partially filled $n\times n$ array with elements chosen from $\{1,\ldots,n\}$ such that each element occurs at most once in every column and at most once in every row. In other words, some of the cells of the array are empty and the filled entries agree with the Latin property. The size of $P$, denoted by $|P|$, is the number of filled entries. {We need to show that  providing $n$ is sufficiently large, if a partial Latin square $P$ of size at most $\epsilon n^2$ can be completed to a Latin square $L$, then $P$ can also be completed to a different Latin square $L'$.}

%We need to show that if $n$ is sufficiently large, then every  partial Latin square $P$ of size at most $\epsilon n^2$ that can be completed to a Latin square $L$  can also be completed to a different Latin square $L'$.

For such a $P$, let $R,C,S$ be respectively the set of all rows, columns and symbols  in $P$ that have at least $\delta n$ filled entries, where $\delta=0.012$. We extend $P$ to a larger partial Latin square $P_1$ by completing all those rows, columns and symbols by filling the empty cells with the entries of $L$. Let $m=\max\{|R|,|C|,|S|\}$, and note $m \le \frac{\epsilon}{\delta}n \le 0.0084n$. We obtain $P_2$  by filling  $m-|R|$ additional rows, $m-|C|$ additional columns, and $m-|S|$ additional symbols with entries of $L$. Since $m+\delta n<n$,  exactly $m$ rows, $m$ columns, and $m$ symbols are all fully filled in $P_2$.

% For such a $P$, let $R$ be the set of all rows in $P$ that have at least  $\delta n$ filled entries, where  $\delta=0.02$. Note $|R|<\frac{\epsilon}{\delta}n$. We extend $P$ to a larger partial Latin square $P_1$ by completing all those  rows by filling the empty cells with the entries of $L$.  Then, $|P_1|\leq|P|+|R|n<(\epsilon+\frac{\epsilon}{\delta})n^2$. Next, we repeat the same process for columns and symbols. Let $C$ be the set of columns in $P_1$ that have at least $\delta n$ filled entries, and note $|C|<(\frac{\epsilon}{\delta}+\frac{\epsilon}{\delta^2})n$. We fill all empty cells in such columns of   $P_1$ with the entries of $L$. Then, $|P_2|\leq|P_1|+|C|n<(\epsilon+\frac{2\epsilon}{\delta}+\frac{\epsilon}{\delta^2})n^2$. Next, let $S$ be the set of symbols that appear at least $\delta n$ times in $P_2$, and note $|S|<(\frac{\epsilon}{\delta}+\frac{2\epsilon}{\delta^2}+\frac{\epsilon}{\delta^3})n$. We fill all such symbols to obtain $P_3$ with the entries of $L$. Then $|P_3|\leq |P_2|+|S|n<(\
% epsilon+\frac{3\epsilon}{\delta}+\frac{3\epsilon}{\delta^2}+\frac{\epsilon}{\delta^3})n^2$. Finally, let $m=\max\{|R|,|C|,|S|\}<(\frac{\epsilon}{\delta}+\frac{2\epsilon}{\delta^2}+\frac{\epsilon}{\delta^3})n$. We fill another $m-|R|$ rows, $m-|C|$ columns, and $m-|S|$ symbols with entries of $L$ to obtain $P_4$. Note that  exactly $m$ rows, $m$ columns and $m$ symbols are all fully filled in $P_4$.

Let $(x,y,z) \in L \setminus P_2$.  Such an element exists because $|P_2|\leq |P|+3mn \le (\epsilon+\frac{3\epsilon}{\delta})n^2<n^2$. Let $z'$ be any symbol such that $(x,j,z'),(i,y,z')\not\in P_2$ for all $i,j\in \{1,\ldots,n\}$. Such a $z'$ exists because the number of symbols in the $x$-th row and the number of symbols in the $y$-th column of $P_2$ are  each at most $\delta n+2m$, and thus there are in total at most $2\delta n +4m<0.06n$ symbols appearing in the $x$-th row and the $y$-th column.

Let $P_3=P_2\cup\{(x,y,z')\}$  and we claim that $P_3$ can be completed to a Latin square. Note that $P_3$ still has exactly $m$ completed rows,  columns and symbols as filling $(x,y,z')$ in $P_2$ cannot create another complete row, column or symbol. Start from the complete $3$-partite graph $K_{n,n,n}$ where the vertices of each part are labeled with $\{1,\ldots,n\}$, and for every entry $(i,j,k) \in P_3$ remove the three edges of the triangle $(i,j,k)$ from the graph. Let $G$ be the resulting graph. Note that $G$ has $3m$ vertices of degree $0$ corresponding to the completed rows, columns and symbols in $P_3$. Ignoring the $0$-degree vertices, $G$ is  balanced and locally balanced, and it is of minimum degree at least $2n-2(\delta n+2m+1)>1.9426n>\frac{101}{52}(n-m)$. Hence by Theorem~\ref{thm:decomp}, it admits a $K_3$-decomposition, which in turn corresponds to a completion to a Latin square $L'$. Note that  $L'\neq L$ as the two Latin squares disagree on the $(x,y)$-th entry.
\end{proof}

{
\begin{remark}
A conjecture of Daykin and H\"aggkvist~\cite{MR777169} (see~\cite[Conjecture 1.3]{BKLOT}) suggests that  Theorem~\ref{thm:decomp} holds under the weaker condition that the   minimum degree of $G$ is at least $3n/2$. If this is true, the proof of Theorem~\ref{thm:scs} provides a better lower-bound of $2^{-7} n^2$ on the size of the smallest critical set. However, this is  still far from the conjectured bound of $\lfloor n^2/4 \rfloor$.  
\end{remark}}

\subsection{VC and recursive teaching dimension, Theorems~\ref{thm:VCdimension}~and~\ref{thm:RT}}
The van der Waerden conjecture, proved in~\cite{MR604006,MR638007,MR625097}, can be used to obtain a lower-bound for the number of Latin squares of order $n$.

\begin{lemma} [{\cite[Theorem 17.2]{MR1207813}}]
\label{lem:ln}
Let $\cL_n$ be the set of all Latin squares of order $n$. Then
$$|\cL_n|\geq\frac{(n!)^{2n}}{n^{n^2}}.$$
\end{lemma}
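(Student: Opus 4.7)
The plan is to build a Latin square row by row and bound the number of valid choices at each stage by the permanent of a doubly stochastic matrix, invoking the Egorychev--Falikman resolution of the van der Waerden conjecture. Suppose that $k$ valid rows, each a permutation of $\{1,\ldots,n\}$, have already been placed. The admissible extensions to the $(k+1)$-st row are exactly the systems of distinct representatives of the family $\{A_1^{(k)},\ldots,A_n^{(k)}\}$, where $A_j^{(k)}$ is the set of symbols not yet used in column $j$. Since every preceding row contributes exactly one symbol to each column, $|A_j^{(k)}|=n-k$, and by symmetry each symbol lies in exactly $n-k$ of these sets. Thus the number of legal rows equals $\mathrm{perm}(M^{(k)})$, where $M^{(k)}$ is an $n\times n$ $0/1$ matrix with all row and column sums equal to $n-k$.

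Next I would apply the van der Waerden bound. The rescaled matrix $D=M^{(k)}/(n-k)$ is doubly stochastic, so $\mathrm{perm}(D)\geq n!/n^n$, which gives
\[
\mathrm{perm}(M^{(k)}) \;\geq\; (n-k)^n \cdot \frac{n!}{n^n}.
\]
This lower bound depends only on $k$, not on the particular rows chosen, so the counts multiply cleanly across stages to yield
\[
|\cL_n| \;\geq\; \prod_{k=0}^{n-1} (n-k)^n \cdot \frac{n!}{n^n} \;=\; (n!)^n \cdot \frac{(n!)^n}{n^{n^2}} \;=\; \frac{(n!)^{2n}}{n^{n^2}},
\]
using $\prod_{k=0}^{n-1}(n-k)^n=(n!)^n$.

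The argument has essentially no obstacle once the Egorychev--Falikman theorem is in hand: the whole proof is a telescoping count. The only point requiring a moment's care is verifying that the rescaled matrix is genuinely doubly stochastic at every stage, but this is automatic because each completed row uses every symbol exactly once, forcing both marginals of $M^{(k)}$ to remain uniform.
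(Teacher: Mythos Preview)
Your proof is correct and follows exactly the standard argument: the paper does not give its own proof of this lemma but cites it from \cite[Theorem~17.2]{MR1207813} and explicitly notes that it is obtained via the van der Waerden conjecture (Egorychev--Falikman), which is precisely the row-by-row permanent bound you carry out.
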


Ghandehari, Hatami and Mahmoodian~\cite[Theorem 3]{MR2136056} used Bregman's theorem~\cite{Bre} to obtain an upper-bound for the number of partial Latin squares of a given size.

\begin{lemma}[{\cite[Theorem 3]{MR2136056}}]
\label{lem:tnk}
 Let $\T_{n,k}$ be the set of all partial Latin squares of order $n$ and of size $k$. Then
$$|\T_{n,k}|\leq\binom{n^2}{k}\frac{n!^{2n-\frac{k}{n}}e^{n(3+\frac{\ln(2\pi n)^2}{4})}}{(n-\frac{k}{n})!^{2n}e^k}.$$
\end{lemma}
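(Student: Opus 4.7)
The plan is to separate the counting of partial Latin squares into two tasks: choosing the support (which cells are filled) and counting valid symbol fillings of a fixed support pattern. The first task contributes the explicit $\binom{n^2}{k}$ factor, while the second will be bounded by a row-by-row application of Bregman's theorem.

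Fix a cell pattern with row-degree sequence $(a_1,\ldots,a_n)$ and column-degree sequence $(b_1,\ldots,b_n)$ with $\sum_i a_i = \sum_j b_j = k$, and build the partial Latin square one row at a time. Given the fillings of rows $1,\ldots,i-1$, the number of valid completions of row $i$ is the number of injective maps from the $a_i$ pre-chosen columns into $\{1,\ldots,n\}$ that avoid, for each column $j$, the set $F_j^{(i)}$ of symbols already placed in column $j$. This count is a permanent of a $0/1$ matrix; augmenting by $n-a_i$ all-ones rows makes the matrix square and scales the permanent by $(n-a_i)!$, after which Bregman's theorem gives the row-$i$ bound $\frac{(n!)^{(n-a_i)/n}}{(n-a_i)!}\prod_{j:(i,j)\in P} \bigl((n-f_{i,j})!\bigr)^{1/(n-f_{i,j})}$, where $f_{i,j}=|F_j^{(i)}|$.

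Multiplying over $i$ and rearranging, the augmentation factors combine to $(n!)^{n-m}/\prod_i(n-a_i)!$ where $m=k/n$, using $\sum_i(n-a_i)=n^2-k$. The per-cell Bregman factors reorganize by column: as $i$ ranges over the rows hitting column $j$, $f_{i,j}$ traverses $0,1,\ldots,b_j-1$, so their collection equals $\prod_j\prod_{r=n-b_j+1}^n(r!)^{1/r}$. I then invoke the convexity of $x\mapsto \log(n-x)!$ in the first factor, together with a parallel concavity property of the second factor in the $b_j$'s, both of which combined with $\sum a_i=\sum b_j=k$ imply that the bound is maximized when $a_i=b_j=m$ uniformly. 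Finally, Stirling's approximation $(r!)^{1/r}\sim (r/e)(2\pi r)^{1/(2r)}$ converts the product to the desired closed form: the $r/e$ pieces multiply to $e^{-k}(n!/(n-m)!)^n$, which combined with the first factor give the main term $(n!)^{2n-m}/(n-m)!^{2n}\cdot e^{-k}$; the $(2\pi r)^{1/(2r)}$ pieces and the $O(1/r)$ tails in $\log r!$ aggregate to at most $e^{n(3+\ln(2\pi n)^2/4)}$ via the estimate $\sum_{r=1}^n 1/r = O(\log n)$.

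I expect the main obstacle to be verifying the concavity/convexity claims used to reduce to the uniform degree sequence. In particular, the function $\psi(x)=\sum_{r=n-x+1}^n \log((r!)^{1/r})$ is a sum of non-elementary terms, and establishing its concavity in $x\in\{0,1,\ldots,n\}$ requires sharp monotonicity bounds on $\log((r!)^{1/r})$ derived from Stirling. A secondary difficulty is the Stirling bookkeeping: showing that the cumulative lower-order error aggregates uniformly in $(a_i)$ and $(b_j)$ to at most $e^{n(3+\ln(2\pi n)^2/4)}$ will require a careful worst-case analysis over degree sequences, and in particular checking that sub-leading terms from each of the $k$ cells can be summed without any super-linear blow-up.
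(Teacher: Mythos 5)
The paper does not prove this lemma; it is imported verbatim from \cite[Theorem 3]{MR2136056}, with the only indication of method being that the cited authors ``used Bregman's theorem.'' Your proposal reconstructs exactly that argument --- pattern selection giving $\binom{n^2}{k}$, row-by-row permanent bounds via Bregman after padding with all-ones rows, reorganization of the per-cell factors by column, reduction to the uniform degree sequence, and Stirling --- and the skeleton is sound. Two remarks on the obstacles you flag. First, the concavity you worry about is not delicate: the increments of $\psi(b)=\sum_{r=n-b+1}^{n}\frac{\log r!}{r}$ are $\frac{\log(n-b)!}{n-b}$, so discrete concavity of $\psi$ is equivalent to the classical fact that $(r!)^{1/r}$ is increasing in $r$ (i.e.\ $r!\le (r+1)^r$), while the companion convexity of $x\mapsto\log\Gamma(n-x+1)$ is immediate from log-convexity of $\Gamma$; Jensen then handles both reductions (with $(n-k/n)!$ read as $\Gamma(n-k/n+1)$ when $k/n$ is not an integer). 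Second, your accounting of the error term via $\sum_{r\le n}1/r=O(\log n)$ is slightly misaimed: the exponent $\frac{(\ln(2\pi n))^2}{4}$ arises precisely from $\sum_{r\le n}\frac{\ln(2\pi r)}{2r}\approx\int_1^n\frac{\ln(2\pi x)}{2x}\,dx=\frac{(\ln(2\pi n))^2-(\ln 2\pi)^2}{4}$, with the $e^{3n}$ factor absorbing the remaining $O(1)$-per-row and $e^{O(1/r^2)}$ corrections; stated this way the bookkeeping is uniform over all degree sequences and closes without further case analysis.
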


\paragraph*{The VC-dimension of Latin squares}

%Next we turn our attention to the VC-dimension.
The most basic result concerning VC-dimension is the Sauer-Shelah lemma. This lemma that has been independently proved several times (e.g. in~\cite{MR0307902}), provides an upper-bound on the size of a concept class $\C \subseteq \mathcal{F}(\Omega)$ in terms of $|\Omega|$ and $\VC(\C)$. Formally it says $|\C| \le \sum_{i=0}^d {|\Omega| \choose i}$ where $d=\VC(\C)$. Note that for the set of $n \times n$ Latin squares $\cL_n \subseteq \{1,\ldots,n\}^3$, we have $|\Omega|=n^3$. Then it is not difficult to see that the Sauer-Shelah lemma together with Lemma~\ref{lem:ln} implies $\VC(\cL_n) \ge n^2 \left(\frac{1}{3}-o(1) \right)$. The $1/3$ factor in this bound is due to the cubic size of $|\Omega|$ in terms of $n$. To obtain the $n^2(1-o(1))$ bound of Theorem~\ref{thm:VCdimension}, we will use the following strengthening of the Sauer-Shelah lemma due to Pajor~\cite{MR903247}.

\begin{lemma}[~\cite{MR903247}]  \label{lem:shatter}
Every finite set family $\mathcal{F}$ shatters at least $|\mathcal{F}|$ sets.
\end{lemma}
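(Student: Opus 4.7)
The plan is to prove Pajor's lemma by induction on the size of the ground set $\Omega$ (we may assume $\Omega = \bigcup_{A \in \mathcal{F}} A$, since adding ``dummy'' elements to $\Omega$ affects neither side of the claimed inequality). The base case $|\Omega|=0$ is immediate: then $\mathcal{F}$ is either empty or equals $\{\emptyset\}$, and in either case the number of shattered sets equals $|\mathcal{F}|\in\{0,1\}$.

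For the inductive step, I will fix an arbitrary element $x\in\Omega$ and split $\mathcal{F}$ according to whether each of its members contains $x$. Let $\mathcal{F}_0 = \{A \in \mathcal{F} : x \notin A\}$ and $\mathcal{F}_1 = \{A \setminus \{x\} : A \in \mathcal{F},\; x \in A\}$, so that $\mathcal{F}_0$ and $\mathcal{F}_1$ are set families on the smaller ground set $\Omega\setminus\{x\}$ with $|\mathcal{F}_0|+|\mathcal{F}_1|=|\mathcal{F}|$. Let $\mathcal{S}_i$ denote the collection of subsets of $\Omega\setminus\{x\}$ shattered by $\mathcal{F}_i$; by the induction hypothesis, $|\mathcal{S}_i|\geq|\mathcal{F}_i|$ for $i=0,1$.

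The key step is to verify that $\mathcal{F}$ itself shatters every set in $\mathcal{S}_0\cup\mathcal{S}_1$ and, in addition, every set of the form $S\cup\{x\}$ with $S\in\mathcal{S}_0\cap\mathcal{S}_1$. The first assertion is immediate, since any trace realized inside $\mathcal{F}_i$ is also realized inside $\mathcal{F}$ (using $x\notin S$ to handle the case $i=1$). For the second, given $T\subseteq S\cup\{x\}$, I split on whether $x\in T$: if $x\in T$, I take $A\in\mathcal{F}$ with $x\in A$ and $A\setminus\{x\}\in\mathcal{F}_1$ realizing the trace $T\setminus\{x\}$ on $S$; if $x\notin T$, I take $A\in\mathcal{F}_0\subseteq\mathcal{F}$ realizing the trace $T$ on $S$. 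In either case $A\cap(S\cup\{x\})=T$. Since sets in $\mathcal{S}_0\cup\mathcal{S}_1$ omit $x$ while sets of the form $S\cup\{x\}$ contain it, these two collections of shattered sets are disjoint, yielding at least
$$|\mathcal{S}_0\cup\mathcal{S}_1| + |\mathcal{S}_0\cap\mathcal{S}_1| \;=\; |\mathcal{S}_0|+|\mathcal{S}_1| \;\geq\; |\mathcal{F}_0|+|\mathcal{F}_1| \;=\; |\mathcal{F}|$$
distinct sets shattered by $\mathcal{F}$, completing the induction.

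The only delicate point is the second half of the key step: one must ensure the chosen witness $A$ intersects the full set $S\cup\{x\}$ exactly in $T$ (not merely that $A\cap S$ has the right value), which is precisely what forces the ``promoted'' shattered sets $S\cup\{x\}$ to come from $\mathcal{S}_0\cap\mathcal{S}_1$ and not from $\mathcal{S}_0\cup\mathcal{S}_1$. Once this is noticed, the elementary inclusion-exclusion identity $|A\cup B|+|A\cap B|=|A|+|B|$ gives the counting on the nose.
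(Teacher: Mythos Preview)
Your proof is correct and is the standard inductive argument for Pajor's lemma. Note, however, that the paper does not supply its own proof of this statement: Lemma~\ref{lem:shatter} is simply quoted from~\cite{MR903247} and used as a black box in the proof of Theorem~\ref{thm:VCdimension}. So there is nothing in the paper to compare your argument against; you have filled in a proof the authors chose to omit, and done so cleanly.
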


%The proof of Theorem~\ref{thm:VCdimension} will use  a similar counting argument as it was used in the proof of Theorem~\ref{thm:RT}.

\begin{proof}[Proof of Theorem~\ref{thm:VCdimension}]
We will prove that $n^2-e^{1+\frac{1}{\sqrt{n}}} n^{5/3}  < \VC(\cL_n)$ for sufficiently large $n$. Note that if a  set  $S \subseteq \{1,\ldots,n\}^3$ is shattered by $\cL_n$, then   in particular $S \cap L = S$ for some $L \in \cL_n$, and thus $S \subseteq L$. Hence every shattered set $S$ corresponds to a partial Latin square.  By Lemma~\ref{lem:shatter}, the set of all Latin squares of order $n$ shatters at least $|\cL_n|$ sets. It follows that for $d=\VC(\cL_n)$, we have
{\begin{equation}\label{eqn:shatterlatin}
\sum\limits_{k=0}^{d} |\T_{n,k}| \ge |\cL_n|.
\end{equation}}

Hence to prove $n^2-e^{1+\frac{1}{\sqrt{n}}} n^{5/3} < \VC(\cL_n)$, it suffices to show that  for every $k \le n^2-e^{1+\frac{1}{\sqrt{n}}} n^{5/3} $, we have $|\T_{n,k}|<\frac{|\cL_n|}{n^2}$,
or equivalently $|\cL_n| \le n^2 |\T_{n,k}|$ implies $k >n^2-e^{1+\frac{1}{\sqrt{n}}} n^{5/3}$.

We can follow a similar calculation  as in~\cite{MR2136056}: Assume $|\cL_n|  \le n^2 |\T_{n,k}|$. Then by Lemma~\ref{lem:ln} and Lemma~\ref{lem:tnk},  
\begin{equation} \label{eqn:less}
\frac{(n!)^{2n}}{n^{n^2}} \le n^2 \binom{n^2}{k}\frac{n!^{2n-\frac{k}{n}}e^{n(3+\frac{\ln(2\pi n)^2}{4})}}{(n-\frac{k}{n})!^{2n}e^k}.
\end{equation}
Setting $c=1-\frac{k}{n^2}$, and using  $\binom{n^2}{k}=\binom{n^2}{n^2-k}\leq\big(\frac{e}{c}\big)^{cn^2}$, we obtain
\begin{equation*}
\frac{n!^{n-cn}}{n^{n^2}} \le \frac{n^2e^{cn^2}e^{n\ln(2\pi n)^2}}{c^{cn^2}(cn)!^{2n}e^{n^2-cn^2}}.
\end{equation*}
Using {$n!\geq(\frac{n}{e})^n$}, we obtain
%$\sqrt{2\pi n}(\frac{n}{e})^n\leq n!\leq\sqrt{2\pi n}(\frac{n}{e})^ne^{\frac{1}{12n}}$
\begin{equation*}
\frac{n^{n^2-cn^2}}{e^{n^2-cn^2}n^{n^2}} \le \frac{n^2e^{3cn^2}e^{n\ln(2\pi n)^2}}{c^{cn^2}(cn)^{2cn^2}e^{n^2-cn^2}},
\end{equation*}
and thus
\begin{equation} \label{eqn:last1}
c^{3c}n^c  \le e^{3c}e^{\frac{\ln(2\pi n)^2}{n}}n^{\frac{2}{n^2}}.
\end{equation}

Fix a sufficiently large $n$. If $c=\frac{e^{1+\frac{1}{\sqrt{n}}}}{n^{1/3}}$, then $c^{3c}n^c>e^{3c}e^{\frac{\ln(2\pi n)^2}{n}}n^{\frac{2}{n^2}}$, and moreover $c^{3c}n^ce^{-3c}$ is an increasing function of $c$ in $[n^{-1/3},\infty)$. So  inequality~\eqref{eqn:last1} implies $c < \frac{e^{1+\frac{1}{\sqrt n}}}{n^{1/3}}$, which in turn shows $k > n^2 - e^{1+\frac{1}{\sqrt{n}}}n^{5/3}$ as desired. 

%
%We can apply a similar calculation  as in the proof of Theorem~\ref{thm:RT}, since here we only have an extra factor of $n^2$ on the left side of the inequality \eqref{eqn:>}. Let $c=1-\frac{k}{n^2}$, and by \eqref{eqn:last}, we obtain that $|\T_{n,k}| \ge \frac{|\cL_n|}{n^2}$ implies
%
%\begin{equation}
%c^{3c}n^c < e^{3c}e^{\frac{\ln(2\pi n)^2}{n}}n^{-\frac{2}{n}}.
%\end{equation}
%
%Fix a sufficiently large $n$ and by the same argument, we obtain $c\geq\frac{e^{1+\frac{1}{\sqrt n}}}{n^{1/3}}$, or $k\leq n^2-(e+o(1))n^{5/3}$. Thus, $VC(\cL_n)\geq n^2-(e+o(1))n^{5/3}$ as desired.
\end{proof}

\paragraph*{The recursive teaching dimension of Latin squares}
The proof of Theorem~\ref{thm:RT} will use  a similar counting argument as it was used in the proof of Theorem~\ref{thm:VCdimension}.
%Now we are ready to give the lower-bound on the recursive teaching dimension of the concept class of Latin squares of order $n$.

\begin{proof}[Proof of Theorem~\ref{thm:RT}]
Recall that $\cL_n$ denotes the set of all Latin squares of order $n$, and $\T_{n,k}$ denotes the set of all partial Latin squares of order $n$ and of size $k$.
%For a partial Latin square $P$ and a Latin square $L$, we denote $P\preceq L$ if $L$ agrees with $P$. Set $\cL= \cL_n$ and consider the following process.
Set $\cL = \cL_n$, and while  there are  partial Latin squares $P \in \T_{n,k}$ that have  unique extensions to full Latin squares $L \in \cL$,  remove such $L$'s from $\cL$. Repeat this process with the updated $\cL$ until no such partial Latin square  can  be found. Denote by $R$  the set of all Latin squares that are removed from the initial $\cL$, and note that  $|R| \le |\T_{n,k}|$. Note further that if $\cL \setminus R$ is not empty, then its minimum teaching dimension is at least $k$. We know from the proof of Theorem~\ref{thm:VCdimension} that $|\cL_n|> |\T_{n,k}|$ if $k \le n^2-(e+o(1))n^{5/3}$, and thus $\RTD(\cL_n)\geq \TD_{\min}(\cL_n \setminus R) \geq  n^2-(e+o(1))n^{5/3}$ as desired.
\end{proof}

\section{Concluding Remarks}
In Theorem~\ref{thm:scs} we proved that the size of the smallest critical set for Latin squares of order $n$ is of quadratic order, however  Conjecture~\ref{conj:critical} still remains unsolved.

In Theorems~\ref{thm:VCdimension}~and~\ref{thm:RT} we established a lower-bound of $n^2-(e+o(1))n^{5/3}$ for both VC-dimension and the recursive teaching dimension of the set of Latin squares of order $n$. One can easily obtain an upper-bound of the form $n^2-\Omega(n)$ for the VC-dimension, but obtaining a stronger upper-bound, and more ambitiously, determining the exact asymptotics of the VC-dimension seems highly nontrivial. For the teaching dimension and consequently recursive teaching dimension, a stronger upper-bound of $n^2-\frac{\sqrt{\pi}}{2}n^{3/2}$ follows from the results of~\cite{MR2136056}. Hence for sufficiently large $n$, 
$$n^2-(e+o(1))n^{5/3} \le \RTD(\cL_n)\leq \TD(\cL_n) \le n^2-\frac{\sqrt{\pi}}{2}n^{3/2}.$$
It would be interesting to improve either of the constants $5/3$ and $3/2$ appearing in the power of $n$ in the above bounds.

\section*{Acknowledgement}
We wish to thank Yaqiao Li for bringing our attention to the notions of teaching dimension and recursive teaching dimension. We would also like to thank John Bate, John Van Rees and Nicholas Cavenagh for their valuable comments and suggestions.

\bibliographystyle{alpha}
\bibliography{td_ls}

\newcommand{\etalchar}[1]{$^{#1}$}
\begin{thebibliography}{ABCST92}

\bibitem[ABCST92]{ABCS}
Martin Anthony, Graham Brightwell, Dave Cohen, and John Shawe-Taylor.
\newblock On exact specification by examples.
\newblock {\em Proceedings of the 5th Workshop on Computational Learning
  Theory}, pages 311--318, 1992.

\bibitem[BKL{\etalchar{+}}16]{BKLOT}
Ben Barber, Daniela K{\"u}hn, Allan Lo, Deryk Osthus, and Amelia Taylor.
\newblock Clique decomposition of multipartite graphs and completion of latin
  squares.
\newblock {\em arXiv preprint arXiv:1603.01043}, 2016.

\bibitem[Bre73]{Bre}
Lev~M. Bregman.
\newblock Some properties of nonnegative matrices and their permanents.
\newblock {\em Soviet Math. Dokl.}, pages 945--949, 1973.

\bibitem[BvR99]{MR1724489}
John~A. Bate and G.~H.~John van Rees.
\newblock The size of the smallest strong critical set in a {L}atin square.
\newblock {\em Ars Combin.}, 53:73--83, 1999.

\bibitem[Cav07]{MR2330083}
Nicholas~J. Cavenagh.
\newblock A superlinear lower bound for the size of a critical set in a {L}atin
  square.
\newblock {\em J. Combin. Des.}, 15(4):269--282, 2007.

\bibitem[Cav08]{MR2453264}
Nicholas~J. Cavenagh.
\newblock The theory and application of {L}atin bitrades: a survey.
\newblock {\em Math. Slovaca}, 58(6):691--718, 2008.

\bibitem[CCT16]{CCT}
Xi~Chen, Yu~Cheng, and Bo~Tang.
\newblock A note on teaching for {VC} classes.
\newblock {\em Electronic Colloquium on Computational Complexity, Report
  No.65}, 2016.

\bibitem[CDS91]{MR1129273}
Joan Cooper, Diane Donovan, and Jennifer Seberry.
\newblock Latin squares and critical sets of minimal size.
\newblock {\em Australas. J. Combin.}, 4:113--120, 1991.
\newblock Combinatorial mathematics and combinatorial computing (Palmerston
  North, 1990).

\bibitem[CR16]{CR}
Nicholas Cavenagh and Reshma Ramadurai.
\newblock On the distances between {L}atin squares and the smallest defining
  set size.
\newblock {\em arXiv preprint arXiv:1602.07734}, 2016.

\bibitem[CVR79]{MR541920}
Donald~J. Curran and G.~H.~John Van~Rees.
\newblock Critical sets in {L}atin squares.
\newblock In {\em Proceedings of the {E}ighth {M}anitoba {C}onference on
  {N}umerical {M}athematics and {C}omputing ({U}niv. {M}anitoba, {W}innipeg,
  {M}an., 1978)}, Congress. Numer., XXII, pages 165--168. Utilitas Math.,
  Winnipeg, Man., 1979.

\bibitem[DH84]{MR777169}
David~E. Daykin and Roland H{\"a}ggkvist.
\newblock Completion of sparse partial {L}atin squares.
\newblock In {\em Graph theory and combinatorics ({C}ambridge, 1983)}, pages
  127--132. Academic Press, London, 1984.

\bibitem[DMRS03]{MR2011736}
Diane Donovan, Ebadollah~S. Mahmoodian, Colin Ramsay, and Anne~Penfold Street.
\newblock Defining sets in combinatorics: a survey.
\newblock In {\em Surveys in combinatorics, 2003 ({B}angor)}, volume 307 of
  {\em London Math. Soc. Lecture Note Ser.}, pages 115--174. Cambridge Univ.
  Press, Cambridge, 2003.

\bibitem[Duk15]{Duk}
Peter~J. Dukes.
\newblock Fractional triangle decompositions of dense 3-partite graphs.
\newblock {\em arXiv preprint arXiv:1510.08998}, 2015.

\bibitem[Ego81]{MR638007}
Georgy~P. Egorychev.
\newblock Proof of the van der {W}aerden conjecture for permanents.
\newblock {\em Sibirsk. Mat. Zh.}, 22(6):65--71, 225, 1981.

\bibitem[Fal81]{MR625097}
Dmitry~I. Falikman.
\newblock Proof of the van der {W}aerden conjecture on the permanent of a
  doubly stochastic matrix.
\newblock {\em Mat. Zametki}, 29(6):931--938, 957, 1981.

\bibitem[FFR97]{MR1468170}
Chin-Mei Fu, Hung-Lin Fu, and C.~A. Rodger.
\newblock The minimum size of critical sets in {L}atin squares.
\newblock {\em J. Statist. Plann. Inference}, 62(2):333--337, 1997.

\bibitem[GHM05]{MR2136056}
Mahya Ghandehari, Hamed Hatami, and Ebadollah~S. Mahmoodian.
\newblock On the size of the minimum critical set of a {L}atin square.
\newblock {\em Discrete Math.}, 293(1-3):121--127, 2005.

\bibitem[GK95]{MR1322630}
Sally~A. Goldman and Michael~J. Kearns.
\newblock On the complexity of teaching.
\newblock {\em J. Comput. System Sci.}, 50(1):20--31, 1995.

\bibitem[Gyi80]{MR604006}
B\'ela Gyires.
\newblock The common source of several inequalities concerning doubly
  stochastic matrices.
\newblock {\em Publ. Math. Debrecen}, 27(3-4):291--304, 1980.

\bibitem[Har93]{MR1223833}
Frank Harary.
\newblock Three new directions in graph theory.
\newblock In {\em Proceedings of the {F}irst {E}stonian {C}onference on
  {G}raphs and {A}pplications ({T}artu-{K}\"a\"ariku, 1991)}, pages 15--19.
  Tartu Univ., Tartu, 1993.

\bibitem[Kee96]{MR1393712}
Anthony~D. Keedwell.
\newblock Critical sets for {L}atin squares, graphs and block designs: a
  survey.
\newblock {\em Congr. Numer.}, 113:231--245, 1996.
\newblock Festschrift for C. St. J. A. Nash-Williams.

\bibitem[KLRS96]{MR1370141}
Eyal Kushilevitz, Nathan Linial, Yuri Rabinovich, and Michael Saks.
\newblock Witness sets for families of binary vectors.
\newblock {\em J. Combin. Theory Ser. A}, 73(2):376--380, 1996.

\bibitem[Mah95]{MR1492638}
Ebadollah~S. Mahmoodian.
\newblock Some problems in graph colorings.
\newblock In {\em Proceedings of the 26th {A}nnual {I}ranian {M}athematics
  {C}onference, {V}ol. 2 ({K}erman, 1995)}, pages 215--218. Shahid Bahonar
  Univ. Kerman, Kerman, 1995.

\bibitem[Nat91]{Nat}
Balas~K. Natarajan.
\newblock {\em Machine Learning: A Theoretical Approach}.
\newblock Morgan Kaufmann, 1991.

\bibitem[Nel77]{Nel}
John Nelder.
\newblock Critical sets in {L}atin squares.
\newblock {\em CSIRO Division of Math. and Stats Newsletter 38}, 1977.

\bibitem[Paj85]{MR903247}
Alain Pajor.
\newblock {\em Sous-espaces {$l^n_1$} des espaces de {B}anach}, volume~16 of
  {\em Travaux en Cours [Works in Progress]}.
\newblock Hermann, Paris, 1985.
\newblock With an introduction by Gilles Pisier.

\bibitem[Sau72]{MR0307902}
Norbert Sauer.
\newblock On the density of families of sets.
\newblock {\em J. Combinatorial Theory Ser. A}, 13:145--147, 1972.

\bibitem[SM91]{SM}
Ayumi Shinohara and Satoru Miyano.
\newblock Teachability in computational learning.
\newblock {\em New Gen. Comput.}, 8(4):337--347, 1991.

\bibitem[vLW92]{MR1207813}
Jacobus~H. van Lint and Richard~M. Wilson.
\newblock {\em A course in combinatorics}.
\newblock Cambridge University Press, Cambridge, 1992.

\end{thebibliography}

\end{document}